\newtheorem{theorem}{Theorem}[section]
\newtheorem{lemma}[theorem]{Lemma}
\newtheorem{proposition}[theorem]{Proposition}
\newtheorem{corollary}[theorem]{Corollary}
\theoremstyle{definition}
\theoremstyle{remark}
\newtheorem{remark}[theorem]{Remark}
\numberwithin{equation}{section}
\begin{document}
\setcounter{page}{1}

\title{Reverses of operator Acz\'{e}l inequality}

\author[ V. Kaleibary and S. Furuichi]{Venus Kaleibary and Shigeru Furuichi}

\address{ School of Mathematics, Iran University of Science and Technology, Narmak, Tehran 16846-13114, Iran.}
\email{\textcolor[rgb]{0.00,0.00,0.84}{v.kaleibary@gmail.com}}

\address{ Department of Information Science, College of Humanities and Sciences, Nihon University,
3-25-40, Sakurajyousui, Setagaya-ku, Tokyo, 156-8550, Japan.}
\email{\textcolor[rgb]{0.00,0.00,0.84}{furuichi@chs.nihon-u.ac.jp}}


\subjclass[2010]{Primary 47A63; Secondary 47A64, 15A60, 26D15.}

\keywords{Acz\'{e}l inequality, Reverse inequality, Operator decreasing function, Operator geometric
mean, Kantorovich constant.}


\begin{abstract}
In this paper we present some inequalities involving operator decreasing functions and operator means. These inequalities provide some reverses of operator Acz\'{e}l inequality dealing with the weighted geometric mean.
\end{abstract} \maketitle

\section{Introduction}

Let $ B(\mathcal{H})$ denote the $C^*$-algebra of all bounded linear operators on a Hilbert space $( \mathcal{H}, \langle \cdot , \cdot \rangle)$. An operator $A \in B(\mathcal{H})$ is called \textit{positive} if $\langle Ax , x \rangle \geq 0$  for every $x \in \mathcal{H}$ and then we write $A \geq 0$. For self-adjoint operators $A,B \in  B(\mathcal{H})$, we say $A \leq B$ if $B - A \geq 0$. Also we say $A$ is \textit{positive definite} and we write $A > 0$, if $\langle Ax , x \rangle > 0$  for every $x \in \mathcal{H}$. Let $f$ be a continuous real function on $(0,\infty)$. Then $f$ is said to be \textit{operator monotone}
(more precisely, operator monotone increasing) if $A \geq B$ implies $f(A) \geq f(B)$ for positive definite operators $A, B $, and \textit{operator monotone decreasing} if $-f$ is operator monotone or
$A \geq B$ implies $f(A) \leq f(B)$.\\
Also, $f$ is said to be \textit{operator convex} if $f(\alpha A+(1- \alpha ) B) \leq \alpha f(A)+(1- \alpha)f(B)$
for all positive definite operators $A,B$ and $ \alpha \in [0, 1]$, and \textit{operator concave} if $-f$ is operator convex.

In 1956, Acz\'{e}l \cite{Ac} proved that if $a_i, b_i (1 \leq i \leq n)$ are positive real numbers such that
$a_1^2 - \sum_{i=2}^n a_i^2 >0 $ and $b_1^2 - \sum_{i=2}^n b_i^2 >0 $, then
\begin{align*}
 \big( a_1^2 - \sum_{i=2}^n a_i^2 \big) \big( b_1^2 - \sum_{i=2}^n b_i^2 \big) \leq \big( a_1 b_1 - \sum_{i=2}^n a_i b_i \big)^2.
\end{align*}
As is well known, Acz\'{e}l's inequality has important applications in the theory of functional equations in
non-Euclidean geometry. In recent years, considerable attention has been given to this inequality involving its
generalizations, variations and applications. See \cite{Ch, D1, P} and references therein. 
Popoviciu \cite{P} first presented an exponential extension of Acz\'{e}l's
inequality which states if $p>0, q>0 , \frac{1}{p}+\frac{1}{q}=1$, 
$a_1^p - \sum_{i=2}^n a_i^P >0 $, and $b_1^q - \sum_{i=2}^n b_i^q >0 $, then
\begin{align*}
 \big( a_1^P - \sum_{i=2}^n a_i^P \big)^\frac{1}{p} \big( b_1^q - \sum_{i=2}^n b_i^q \big)^\frac{1}{q} \leq  a_1 b_1 - \sum_{i=2}^n a_i b_i.
\end{align*}
 A variant of Acz\'{e}l's inequality in inner product spaces was given by
Dragomir \cite{D1} by establishing that if $a, b$ are real numbers and $x, y$ are vectors of an inner product space
such that $a^2- \| x \|^2 > 0$ or $b^2-\| y \|^2 > 0$, then 
\begin{align}\label{e14}
(a^2- \| x \|^2)(b^2-\| y \|^2) \leq (ab-Re \langle x, y \rangle)^2.
\end{align}
Moslehian in \cite{M} proved an operator version of the classical Acz\'{e}l inequality involving $\alpha$-geometric mean 
$A \sharp_\alpha B = A^{1/2} ( A^{-1/2} B A^{-1/2})^\alpha A^{1/2}$. He proved that 
if $ g $ is a non-negative operator decreasing and operator concave function on $(0, \infty)$ , $\frac{1}{p}+ \frac{1}{q}=1, p,q>1 $, and $A$ and $B$ are positive definite operators, then 
\begin{align} 
&g(A^p)\sharp_\frac{1}{q} g(B^q) \leq g (A^p \sharp_\frac{1}{q} B^q), \label{e9}\\ 
& \langle g(A^p) \xi , \xi \rangle ^\frac{1}{p} \langle g(B^q) \xi , \xi \rangle^\frac{1}{q}\leq  \langle g(A^p \sharp_\frac{1}{q}B^q) \xi , \xi \rangle \label{e10}
\end{align}
for all $\xi \in \mathcal{H}$.

In this paper we present some reverses of operator Acz\'{e}l inequlilities \eqref{e9} and \eqref{e10}, by using several reverse Young's inequalities. In fact, we establish some upper bounds for inequlilities \eqref{e9} and \eqref{e10}. These results are proved for a non-negative operator decreasing function $g$ and the condition of operator concavity has been omitted. So, we use less restrective conditions on $g$. The statements are organized in two sections respect to different coefficients.

 \section{Reverse inequalities via Kantorovich constant }
 
Let $A$ and $B$, be positive definite operators. For each $\alpha \in [0, 1]$ the \textit{$\alpha$-arithmetic} mean is defined as
$A\bigtriangledown_\alpha B := (1-\alpha)A + \alpha B$ and the \textit{$\alpha$-geometric} mean is 
\begin{equation*}
A \sharp_\alpha B = A^{1/2} ( A^{-1/2} B A^{-1/2})^\alpha A^{1/2}.
\end{equation*}
Clearly if $AB = BA$, then $A\sharp_\alpha B = A^{1-\alpha}B^\alpha$.
Basic properties of the arithmetic and geometric means can be found in \cite{F}.
It is well-known the Young inequality
\begin{align*}
A \sharp_\alpha B \leq (1- \alpha) A + \alpha B.
\end{align*}

 The celebrated Kantorovich constant is defined by
\begin{align}\label{e12}
K(t) = \dfrac{(t+1)^2}{4t}, \hspace*{1cm} t>0. 
\end{align}
The function $K$ is decreasing on $(0, 1)$ and increasing on $[1, \infty)$, $K(t)=K(\frac{1}{t})$, and $K(t)\geq 1$ for every $t>0$ \cite{F}.
 
The research on the Young inequality is interesting and there are several multiplicative and additive reverses of this inequality \cite{D, LW}. One of this reverse inequalities, is given by Liao et al. \cite{LW} using the Kantorovich constant as follows:
\begin{lemma}\label{l4}
\cite[Theorem 3.1]{LW}
Let $A, B$ be positive operators satisfying the following conditions $ 0 < m I \leq A \leq m^{'} I \leq M^{'} I \leq B \leq M I$ or $ 0 < m I \leq B \leq m^{'} I \leq M ^{'} I \leq A \leq M I$, for some constants $m, m^{'}, M, M^{'}$. Then
\begin{align} \label{e}
(1-\alpha ) A+ \alpha B \leq K(h)^R ( A \sharp_\alpha B ),
\end{align}
where $h = \frac{M}{m}$, $\alpha \in [0 , 1]$, $R = \max \lbrace 1-\alpha, \alpha\rbrace$ and $K(h)$ is the Kantorovich constant defined as \eqref{e12}.
\end{lemma}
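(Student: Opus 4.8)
The plan is to reduce \eqref{e} to a one-variable inequality by a congruence, and then to prove that scalar inequality through a carefully weighted Young inequality.

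First I would set $C := A^{-1/2}BA^{-1/2}$, so that $A\sharp_\alpha B = A^{1/2}C^\alpha A^{1/2}$ and $(1-\alpha)A+\alpha B = A^{1/2}\bigl((1-\alpha)I+\alpha C\bigr)A^{1/2}$. Since the map $X\mapsto A^{1/2}XA^{1/2}$ is order preserving, \eqref{e} is equivalent to $(1-\alpha)I+\alpha C \le K(h)^R C^\alpha$. Next I would locate the spectrum of $C$. In the first case $A\le m'I\le M'I\le B$ forces $A\le B$, hence $C\ge I$, while $B\le MI$ and $A\ge mI$ give $C\le MA^{-1}\le (M/m)I = hI$; thus $\operatorname{sp}(C)\subseteq[1,h]$. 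In the second case the same computation with the roles of $A$ and $B$ interchanged gives $\operatorname{sp}(C)\subseteq[1/h,1]$; equivalently, the second case follows from the first applied to the triple $(B,A,1-\alpha)$ together with the identity $B\sharp_{1-\alpha}A = A\sharp_\alpha B$ and the symmetry of $R = \max\{\alpha,1-\alpha\}$.

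It is thus enough to establish the scalar inequality $(1-\alpha)+\alpha t \le K(t)^R t^\alpha$ for all $t>0$. Granting this, note that $K$ is increasing on $[1,\infty)$ and decreasing on $(0,1]$ with $K(1/h)=K(h)$, so $K(t)^R\le K(h)^R$ throughout $[1,h]$ and throughout $[1/h,1]$; consequently $t\mapsto K(h)^R t^\alpha-(1-\alpha)-\alpha t$ is nonnegative on $\operatorname{sp}(C)$, the functional calculus yields $(1-\alpha)I+\alpha C\le K(h)^R C^\alpha$, and conjugating by $A^{1/2}$ gives \eqref{e}. To prove the scalar inequality, observe that it is invariant under the change $(\alpha,t)\mapsto(1-\alpha,1/t)$ (after multiplying through by $t$, and using $K(t)=K(1/t)$), so I may assume $\alpha\le\tfrac12$ and write $\beta := 1-\alpha = R\in[\tfrac12,1]$. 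Since $K(t)^\beta t^{1-\beta} = (1+t)^{2\beta}\,4^{-\beta}\,t^{1-2\beta}$, raising to the positive power $1/(2\beta)$ shows that $\beta+(1-\beta)t\le K(t)^\beta t^{1-\beta}$ is equivalent to $2\bigl(\beta+(1-\beta)t\bigr)^{\gamma}t^{\,1-\gamma}\le 1+t$ with $\gamma := 1/(2\beta)\in[\tfrac12,1]$. Finally, Young's inequality with weights $\gamma$ and $1-\gamma$ gives $\bigl(\beta+(1-\beta)t\bigr)^{\gamma}t^{\,1-\gamma}\le \gamma\bigl(\beta+(1-\beta)t\bigr)+(1-\gamma)t = \gamma\beta+(1-\gamma\beta)t$, and since $\gamma\beta=\tfrac12$ the right side equals $\tfrac12(1+t)$, completing the proof.

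I expect the scalar inequality to be the crux, and within it the only nonroutine step is recognizing that after clearing the Kantorovich constant by a $2\beta$-th root one should apply Young's inequality with the precise weight $\gamma=1/(2\beta)$, which is exactly what forces $\gamma\beta=\tfrac12$ and collapses the estimate to $\tfrac12(1+t)$. The operator step is then bookkeeping: it uses only that congruence by a positive invertible operator is order preserving and the elementary monotonicity of $K$ about $t=1$; this also explains the grouping of the hypotheses on $A$ and $B$, which is precisely what confines $\operatorname{sp}(A^{-1/2}BA^{-1/2})$ to one side of $1$ and within the ratio $h$ of it.
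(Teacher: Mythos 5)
Your proof is correct. The operator-theoretic reduction --- conjugating by $A^{1/2}$, locating $\operatorname{sp}(A^{-1/2}BA^{-1/2})$ inside $[1,h]$ or $[1/h,1]$, and using the monotonicity of $K$ on either side of $1$ together with the functional calculus --- is exactly the route the paper itself takes when it proves the generalized Lemma \ref{l1} (the paper states Lemma \ref{l4} without proof, citing \cite{LW}). The genuine difference is at the crux: the paper imports the scalar inequality $(1-\alpha)+\alpha t\le K(t)^{R}t^{\alpha}$ wholesale from \cite[Corollary 2.2]{LW}, whereas you prove it. Your derivation --- using the symmetry $(\alpha,t)\mapsto(1-\alpha,1/t)$ together with $K(t)=K(1/t)$ to reduce to $\beta=R\ge\tfrac12$, taking the $2\beta$-th root to clear $K(t)^{\beta}$, and then applying the weighted Young inequality with the precise weight $\gamma=1/(2\beta)$ so that $\gamma\beta=\tfrac12$ collapses the bound to $\tfrac12(1+t)$ --- checks out, including the endpoint cases $\alpha\in\{0,\tfrac12\}$. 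What this buys is a fully self-contained proof whose only external ingredient is the two-term weighted AM--GM inequality; what the paper's citation buys is brevity. Note also that your spectral localization (spectrum on one side of $1$, within ratio $h$) is finer than what the paper's Lemma \ref{l1} needs under the weaker hypothesis $sA\le B\le tA$, but it is exactly the right way to exploit the two sandwich conditions in the present statement.
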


In the following, we generalize Lemma \ref{l4} with the more general sandwich condition $ 0 < sA \leq B \leq tA$. The sketch of proof is similar to that of \cite[Theorem 2.1]{T}.
\begin{lemma} \label{l1}
 Let $ 0 < s A \leq  B \leq t A$, for some scalars $0 < s \leq t $ and $\alpha \in [0, 1]$. Then 
\begin{align} \label{e1}
(1-\alpha ) A+ \alpha B \leq  \max \lbrace  K(s)^R, K(t)^R \rbrace ( A \sharp_\alpha B ),
\end{align}
where $R= \max \lbrace \alpha, 1-\alpha \rbrace $ and $K(t)$ is the Kantorovich constant defined as \eqref{e12}.
\end{lemma}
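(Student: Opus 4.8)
The plan is to reduce the operator inequality \eqref{e1} to a scalar estimate via a congruence transformation and then to deduce that estimate from the pointwise reverse Young inequality with the Kantorovich constant. Since $A>0$ is invertible, I would first set $X:=A^{-1/2}BA^{-1/2}$ and note that $0<sA\le B\le tA$ is equivalent to $sI\le X\le tI$, obtained by sandwiching with $A^{-1/2}$. Using $(1-\alpha)A+\alpha B=A^{1/2}\bigl((1-\alpha)I+\alpha X\bigr)A^{1/2}$ and $A\sharp_\alpha B=A^{1/2}X^{\alpha}A^{1/2}$ and then cancelling the outer factors $A^{1/2}$, inequality \eqref{e1} becomes equivalent to
\[
(1-\alpha)I+\alpha X\ \le\ \max\{K(s)^{R},K(t)^{R}\}\,X^{\alpha}.
\]
As $X$ is positive with spectrum in $[s,t]$, the continuous functional calculus reduces this further to the scalar inequality $(1-\alpha)+\alpha x\le\max\{K(s)^{R},K(t)^{R}\}\,x^{\alpha}$ for every $x\in[s,t]$.

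The heart of the matter is then the pointwise bound
\[
(1-\alpha)+\alpha x\ \le\ K(x)^{R}\,x^{\alpha}\qquad (x>0).
\]
I would obtain this by specializing Lemma \ref{l4} to scalars: for $x\ge 1$ apply it with $A=1$, $B=x$ and degenerate constants $m=m'=1$, $M'=M=x$ (so $h=x$); for $0<x<1$ apply it with $A=x$, $B=1$ and $m=m'=x$, $M'=M=1$ (so $h=1/x$), and use $K(1/x)=K(x)$ together with the fact that $\alpha\mapsto 1-\alpha$ permutes $[0,1]$. Alternatively one may proceed directly: the function $f(x)=\bigl((1-\alpha)+\alpha x\bigr)x^{-\alpha}$ has $f'(x)=\alpha(1-\alpha)x^{-\alpha-1}(x-1)$, hence decreases on $(0,1)$ and increases on $(1,\infty)$ with $f(1)=1$, and one compares it with $K(x)^{R}$ (equality at $\alpha=\tfrac12$).

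Finally I would upgrade the pointwise bound to the constant $\max\{K(s)^{R},K(t)^{R}\}$ using the shape of $K$: since $K$ decreases on $(0,1]$, increases on $[1,\infty)$, and $R\ge 0$, for $x\in[s,t]$ one has $K(x)^{R}\le\max\{K(s)^{R},K(t)^{R}\}$ (treat the cases $t\le 1$, $s\ge 1$, and $s<1<t$ separately). Combined with the pointwise bound this yields the scalar inequality on $[s,t]$, hence \eqref{e1}. The only real obstacle is the pointwise scalar estimate $(1-\alpha)+\alpha x\le K(x)^{R}x^{\alpha}$; the congruence reduction and the monotonicity bookkeeping for $K$ are routine. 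If one prefers not to invoke Lemma \ref{l4} at that point, an honest proof of the estimate needs a short but slightly delicate calculus argument: reduce to $\alpha\in[0,\tfrac12]$ by the symmetry $(\alpha,x)\mapsto(1-\alpha,1/x)$, and then compare the two sides (or their logarithms) near the common value they take at $x=1$.
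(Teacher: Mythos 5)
Your proposal is correct and follows essentially the same route as the paper: reduce via the congruence $X=A^{-1/2}BA^{-1/2}$ and functional calculus to the scalar estimate $(1-\alpha)+\alpha x\le K(x)^{R}x^{\alpha}$ (which the paper simply cites as \cite[Corollary 2.2]{LW}, and which your scalar specialization of Lemma \ref{l4} recovers), then bound $\max_{s\le x\le t}K(x)^{R}$ by $\max\{K(s)^{R},K(t)^{R}\}$ using the monotonicity pattern of $K$. The only cosmetic difference is that you spell out the elementary verification of the scalar inequality that the paper outsources to a reference.
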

\begin{proof}
From \cite[Corollary 2.2]{LW} if $x$ is a positive number and $\alpha \in [0, 1]$, then 
\begin{align*} \label{}
(1-\alpha) + \alpha x \leq K(x)^R  x^{\alpha}.
\end{align*}
 Thus for the positive definite operator $0 < s I \leq C \leq tI$, we have
\begin{align*} \label{}
(1-\alpha) + \alpha C \leq \max_{s \leq x \leq t} K(x)^R C^{\alpha}. 
\end{align*}
Substituting $A^{-\frac{1}{2}} B A^{-\frac{1}{2}}$ for $C$, we get
\begin{align*} \label{}
(1-\alpha) + \alpha A^{-\frac{1}{2}} B A^{-\frac{1}{2}} \leq \max_{s \leq x \leq t}  K(x)^R (A^{-\frac{1}{2}} B A^{-\frac{1}{2}})^{\alpha}. 
\end{align*}
Multiplying $A^{-\frac{1}{2}}$ to the both sides in the above inequality, and using the fact that $\max_{s \leq x \leq t} K(x) = max \lbrace K(s), K(t) \rbrace $, the desired inequality is obtained.

\end{proof}
\begin{remark}
We remark that Lemma \ref{l1} is a generalization of Lemma \ref{l4}. Since, if $ 0 < m I \leq A \leq m^{'} I \leq M^{'} I \leq B \leq M I$ or $ 0 < m I \leq B \leq m^{'} I \leq M ^{'} I \leq A \leq M I$, then $\frac{m}{M} A \leq B \leq \frac{M}{m} A$. Now by letting $s = \frac{m}{M}$ and $t = \frac{M}{m}$ in Lemma \ref{l1}, inequality \eqref{e} is obtained.  Note that $K(t) = K(\frac{1}{t})$,  for every $t>0$.
\end{remark}

\begin{lemma}\label{l2}
Let $ g $ be a non-negative operator monotone decreasing function on $(0, \infty)$ and $A$ be a positive definit operator. Then, for every scalar $\lambda \geq 1$
\begin{align*}
 \frac{1}{\lambda}g(A) \leq g (\lambda A ).
\end{align*}
\end{lemma}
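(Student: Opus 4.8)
The plan is to exploit the integral representation that a non-negative operator monotone decreasing function carries, and thereby reduce the whole statement to a single resolvent comparison that is visibly of the correct sign. First I would record the scalar heuristic, since it reveals which mechanism does the work: for the model functions $g(t) = t^{-r}$ with $r \in [0,1]$ the claim reads $\lambda^{-1} x^{-r} \le (\lambda x)^{-r} = \lambda^{-r} x^{-r}$, i.e. $\lambda^{-1} \le \lambda^{-r}$, which holds precisely because $\lambda \ge 1$ and $r \le 1$. The essential feature is therefore the way the kernel $\frac{1}{t+s}$ rescales under $t \mapsto \lambda t$, and emphatically \emph{not} ordinary monotonicity, which points the wrong way (since $\lambda A \ge A$ gives only $g(\lambda A) \le g(A)$).

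Concretely, I would invoke the classical L\"owner integral representation: a non-negative operator monotone decreasing function $g$ on $(0,\infty)$ admits the form
\begin{equation*}
g(t) = \beta + \int_{[0,\infty)} \frac{1}{t+s}\, d\mu(s),
\end{equation*}
with $\beta \ge 0$ and a positive measure $\mu$. Applying the continuous functional calculus to the positive definite operator $A$, and using that $A$ and $\lambda A$ commute, I would write
\begin{equation*}
g(\lambda A) - \tfrac{1}{\lambda} g(A) = \beta\Big(1 - \tfrac{1}{\lambda}\Big) I + \int_{[0,\infty)} \Big[(\lambda A + sI)^{-1} - \tfrac{1}{\lambda}(A + sI)^{-1}\Big]\, d\mu(s).
\end{equation*}

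The crux is then a term-by-term positivity check. The constant term is positive semidefinite because $\beta \ge 0$ and $\lambda \ge 1$. For the integrand I would rewrite $\tfrac{1}{\lambda}(A+sI)^{-1} = (\lambda A + \lambda s I)^{-1}$ and observe that, since $\lambda \ge 1$ and $s \ge 0$,
\begin{equation*}
\lambda A + sI \le \lambda A + \lambda s I,
\end{equation*}
so that the operator monotone decreasing nature of inversion yields $(\lambda A + \lambda s I)^{-1} \le (\lambda A + s I)^{-1}$; hence each integrand is positive semidefinite. Integrating against the positive measure $\mu$ and adding the nonnegative constant term gives $g(\lambda A) - \tfrac{1}{\lambda} g(A) \ge 0$, which is exactly the assertion.

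I expect the only genuine obstacle to lie in the bookkeeping around the representation, not in the inequality. One must use the correct normalization — in particular, that non-negativity on all of $(0,\infty)$ forces the absence of a linear term and a positive measure supported on $[0,\infty)$ — and confirm convergence of the integral so that interchanging functional calculus and integration is legitimate. An alternative, should one wish to avoid the representation, is to pass to $h = 1/g$ (positive operator monotone increasing, hence operator concave) and reduce the claim to the sub-homogeneity bound $h(\lambda A) \le \lambda h(A)$ via concavity at the boundary point $0$; but this route carries its own boundary technicalities. In either case, once the structural input is granted the core comparison is immediate and of the right sign, so no delicate estimate remains.
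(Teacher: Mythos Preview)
Your argument via the L\"owner integral representation is correct: the kernel comparison $(\lambda A + sI)^{-1} \ge (\lambda A + \lambda sI)^{-1} = \lambda^{-1}(A+sI)^{-1}$ is clean, and the representation you invoke for non-negative operator monotone decreasing functions is standard. However, the paper does not proceed this way. It takes precisely the alternative you sketch at the end: set $f = 1/g$, observe that $f$ is operator monotone increasing on $(0,\infty)$ and hence operator concave, and then use the scalar sub-homogeneity $f(\lambda x) \le \lambda f(x)$ for non-negative concave $f$ and $\lambda \ge 1$ (applied via functional calculus, since $A$ and $\lambda A$ commute) to get $(g(\lambda A))^{-1} \le \lambda (g(A))^{-1}$, whence the claim by inversion.

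The paper's route is shorter and uses only the qualitative fact ``operator monotone $\Rightarrow$ operator concave'' together with a one-line scalar inequality, so it avoids citing the integral representation and the attendant measure-theoretic bookkeeping you flag. Your route, on the other hand, is more structural and would transfer immediately to other comparisons that can be checked kernel-by-kernel. It is worth noting that the ``boundary technicality'' you anticipate in the $h=1/g$ approach is mild: a non-negative concave function on $(0,\infty)$ automatically has a finite limit at $0^+$, so the sub-homogeneity bound goes through without any genuine difficulty.
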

\begin{proof}
First note that since $g$ is analytic on $(0 , \infty)$, we may assume that $g(x) > 0$ for all $x > 0$; otherwise $g$ is identically zero. Also, since $g$ is an operator monotone decreasing on $(0, \infty )$, so $f= 1/g$ is operator monotone on $(0, \infty)$ and hence operator concave function \cite{B1}. On the other hand, it is known that for every  non-negative concave function $f$ and $\lambda \geq 1$, $ f (\lambda x) \leq \lambda f(x)$. Therefore, for every $\lambda \geq 1$ we have 
\begin{align*}
(g (\lambda A))^{ -1 } \leq \lambda (g(A))^{-1}.
\end{align*}
Reversing this inequality, gives the result.
\end{proof} 

\begin{proposition}\label{p1}
 Let $ g $ be a non-negative operator monotone decreasing function on $(0, \infty)$ and $0 < s A \leq  B  \leq t A$ for some constants $0 < s \leq t$. Then, for all $\alpha \in [0 , 1]$
\begin{align} \label{e18}
 g( A \sharp_\alpha B) \leq  \max \lbrace  K(s)^R, K(t)^R \rbrace  (g(A)\sharp_\alpha g(B)),
\end{align}
where $R= \max \lbrace \alpha, 1-\alpha \rbrace $ and $K(t)$ is the Kantorovich constant defined as \eqref{e12}.
\end{proposition}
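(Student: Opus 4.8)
The plan is to combine the reverse Young inequality from Lemma~\ref{l1} with the scaling estimate from Lemma~\ref{l2} and the operator monotone decreasing property of $g$. The starting point is the observation that $0 < sA \leq B \leq tA$ is exactly the hypothesis of Lemma~\ref{l1}, so we have the operator inequality
\begin{align*}
A \sharp_\alpha B \;\geq\; \frac{1}{C}\,\big((1-\alpha)A + \alpha B\big),
\end{align*}
where I abbreviate $C := \max\{K(s)^R, K(t)^R\} \geq 1$ (the constant exceeds $1$ since $K(\cdot)\geq 1$). Applying the operator monotone \emph{decreasing} function $g$ reverses this, giving
\begin{align*}
g(A\sharp_\alpha B) \;\leq\; g\!\left(\tfrac{1}{C}\big((1-\alpha)A+\alpha B\big)\right).
\end{align*}
Now $\tfrac{1}{C}\big((1-\alpha)A+\alpha B\big)$ is a positive definite operator and $C\geq 1$, so Lemma~\ref{l2} (with $\lambda = C$ and the argument $A$ there replaced by $\tfrac{1}{C}((1-\alpha)A+\alpha B)$) yields
\begin{align*}
g\!\left(\tfrac{1}{C}\big((1-\alpha)A+\alpha B\big)\right) \;\leq\; C\, g\big((1-\alpha)A+\alpha B\big).
\end{align*}

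Next I would exploit operator monotone decreasingness once more, in the direction that the arithmetic mean dominates the geometric mean: from the Young inequality $A\sharp_\alpha B \leq (1-\alpha)A+\alpha B$ and monotonicity of $g$ we get $g\big((1-\alpha)A+\alpha B\big) \leq g(A\sharp_\alpha B)$ — but that points the wrong way and would only circle back. The correct move is instead to bound $g$ of the arithmetic mean from above by the $\alpha$-arithmetic mean of the values $g(A), g(B)$ is false in general; rather, I should use that $g$ is operator \emph{convex}. Here is the key point: since $g$ is operator monotone decreasing on $(0,\infty)$, $g$ is operator convex (this is a standard fact — a non-negative operator monotone decreasing function is operator convex, which also follows from $1/g$ being operator concave and standard function-theory, cf.\ the reasoning in the proof of Lemma~\ref{l2}). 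Operator convexity gives
\begin{align*}
g\big((1-\alpha)A + \alpha B\big) \;\leq\; (1-\alpha)\,g(A) + \alpha\, g(B).
\end{align*}

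Finally, chaining these inequalities produces
\begin{align*}
g(A\sharp_\alpha B) \;\leq\; C\,\big((1-\alpha)g(A) + \alpha\, g(B)\big),
\end{align*}
and one last application of the Young inequality $g(A)\sharp_\alpha g(B) \leq (1-\alpha)g(A)+\alpha g(B)$ — wait, that again points the wrong way, so instead I must replace the arithmetic mean of $g(A), g(B)$ by their geometric mean using Lemma~\ref{l1} in reverse? No: the cleanest route is to apply Lemma~\ref{l1} to the pair $g(A), g(B)$ directly, which requires a sandwich relation $s' g(A) \leq g(B) \leq t' g(A)$; but $g$ being operator monotone decreasing and $sA\leq B\leq tA$ gives $g(tA)\leq g(B)\leq g(sA)$, and then $g(tA)\geq \tfrac{1}{t}g(A)$ (Lemma~\ref{l2}, valid when $t\geq 1$) together with a companion lower bound would recover such a relation only under extra case analysis. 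The main obstacle is therefore getting from the $\alpha$-arithmetic mean $(1-\alpha)g(A)+\alpha g(B)$ back down to the $\alpha$-geometric mean $g(A)\sharp_\alpha g(B)$ with the \emph{same} constant $C$; I expect the intended argument sidesteps operator convexity entirely and instead applies Lemma~\ref{l1} a second time to the pair $(g(A), g(B))$ after verifying that the hypothesis $0 < s A \leq B \leq t A$ is inherited — possibly in the form $0 < s\,g(B)\leq \cdots$ or via the fact that $K$ is symmetric under $t\mapsto 1/t$ so the same $\max\{K(s)^R,K(t)^R\}$ controls both pairs. Reconciling that constant is the delicate step, and I would check the boundary cases $\alpha=0,1$ and the commuting case $AB=BA$ first as sanity checks before committing to the general chaining.
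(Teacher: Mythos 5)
Your first half reproduces the paper's argument exactly: Lemma~\ref{l1} gives $(1-\alpha)A+\alpha B\leq C\,(A\sharp_\alpha B)$ with $C=\max\{K(s)^R,K(t)^R\}\geq 1$, and combining the operator monotone decreasingness of $g$ with Lemma~\ref{l2} (in either order — your arrangement and the paper's are equivalent) yields $g(A\sharp_\alpha B)\leq C\,g\big((1-\alpha)A+\alpha B\big)$. The genuine gap is the final step, which you explicitly leave unresolved. Operator convexity of $g$ only gives $g(A\nabla_\alpha B)\leq (1-\alpha)g(A)+\alpha g(B)$, and as you note, descending from that arithmetic mean to $g(A)\sharp_\alpha g(B)$ goes the wrong way (or costs a second multiplicative constant if you invoke Lemma~\ref{l1} again for the pair $g(A),g(B)$, which would spoil the stated bound). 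The missing ingredient is the Ando--Hiai characterization of operator log-convex functions \cite[Theorem 2.1]{Tf}: a non-negative operator monotone \emph{decreasing} function satisfies the stronger inequality
\begin{align*}
g\big((1-\alpha)A+\alpha B\big)\;\leq\; g(A)\sharp_\alpha g(B),
\end{align*}
i.e.\ $g$ of the arithmetic mean is dominated by the \emph{geometric} mean of the values, not merely their arithmetic mean. Chaining this with what you already have finishes the proof with the constant $C$ intact and no case analysis. So the approach is right but incomplete: you identified the correct skeleton and correctly diagnosed where it breaks, but the proof cannot be closed by convexity or by a second application of Lemma~\ref{l1}; it requires the log-convexity theorem, which is precisely the tool the paper cites at this point.
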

\begin{proof}
Since $0 < s A \leq  B  \leq t A$, from Lemma \ref{l1} we have 
 \begin{align*} 
 (1-\alpha ) A+ \alpha B \leq  \lambda ( A \sharp_\alpha B ), 
\end{align*}
where $\lambda = \max \lbrace  K(s)^R, K(t)^R \rbrace$.  We know that $\lambda \geq 1$. Also, the function $g$ is operator monotone decreasing and so
 \begin{align*} 
  g ( \lambda ( A \sharp_\alpha B )) \leq  g((1-\alpha ) A+ \alpha B).
\end{align*}
Now we can write  
\begin{align*} \label{}
\frac{1}{\lambda}g (A \sharp_\alpha B)  \leq g(\lambda (A\sharp_\alpha B)) \leq g((1-\alpha) A + \alpha B) \leq g(A)\sharp_\alpha g(B),
\end{align*}
where the first inequality follows from Lemma \ref{l2} and the last inequality follows from \cite[Theorem 2.1]{Tf}. 
\end{proof}
\begin{theorem}\label{t1}
 Let $ g $ be a non-negative operator monotone decreasing function on $(0, \infty)$ , $\frac{1}{p}+ \frac{1}{q}=1, p,q>1$,  and $0 < s A^p \leq B^q  \leq t A^p$ for some constants $s, t$. Then, for all $\xi \in \mathcal{H}$
\begin{align} 
&g (A^p \sharp_\frac{1}{q} B^q)  \leq \max \lbrace  K(s)^R, K(t)^R \rbrace g(A^p)\sharp_\frac{1}{q} g(B^q), \label{e3}\\ 
& \langle g(A^p \sharp_\frac{1}{q}B^q) \xi , \xi \rangle \leq \max \lbrace  K(s)^R, K(t)^R \rbrace \langle g(A^p) \xi , \xi \rangle ^\frac{1}{p} \langle g(B^q) \xi , \xi \rangle^\frac{1}{q}, \label{e4}
\end{align}
where $R= \max \lbrace \frac{1}{p}, \frac{1}{q} \rbrace $, and $K(t)$ is the Kantorovich constant defined as \eqref{e12}.
\end{theorem}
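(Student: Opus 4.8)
The plan is to reduce both inequalities to Proposition~\ref{p1} by the substitutions $A\mapsto A^p$, $B\mapsto B^q$ and $\alpha=\frac1q$, and then, for the second inequality, to pass from the operator geometric mean to the scalar one via a homogeneity argument. First I would check that the hypotheses match: the assumption $0<sA^p\le B^q\le tA^p$ is exactly the sandwich condition of Proposition~\ref{p1} applied to the pair $(A^p,B^q)$, and with $\alpha=\frac1q$ we have $1-\alpha=\frac1p$, so $R=\max\{\alpha,1-\alpha\}=\max\{\frac1q,\frac1p\}$ agrees with the $R$ in the theorem. Therefore \eqref{e3} follows immediately:
\begin{align*}
g\big(A^p\sharp_{\frac1q}B^q\big)\le \max\{K(s)^R,K(t)^R\}\,\big(g(A^p)\sharp_{\frac1q}g(B^q)\big).
\end{align*}

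For \eqref{e4} I would take $\langle\,\cdot\,\xi,\xi\rangle$ of both sides of \eqref{e3}, which yields
\begin{align*}
\langle g(A^p\sharp_{\frac1q}B^q)\xi,\xi\rangle\le \max\{K(s)^R,K(t)^R\}\,\langle (g(A^p)\sharp_{\frac1q}g(B^q))\xi,\xi\rangle,
\end{align*}
so it remains to prove the purely scalar--operator estimate
\begin{align*}
\langle (X\sharp_\alpha Y)\xi,\xi\rangle\le \langle X\xi,\xi\rangle^{1-\alpha}\langle Y\xi,\xi\rangle^{\alpha}
\end{align*}
for positive definite $X,Y$ (here $X=g(A^p)$, $Y=g(B^q)$, which are positive definite because, as noted in the proof of Lemma~\ref{l2}, a non-identically-zero operator decreasing $g$ is strictly positive on $(0,\infty)$). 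To establish this, I would use the homogeneity $(aX)\sharp_\alpha(bY)=a^{1-\alpha}b^{\alpha}(X\sharp_\alpha Y)$ together with the Young inequality $X\sharp_\alpha Y\le(1-\alpha)X+\alpha Y$: scaling $X$ by $a=\langle X\xi,\xi\rangle^{-1}$ and $Y$ by $b=\langle Y\xi,\xi\rangle^{-1}$ normalizes both diagonal entries to $1$, Young gives $\langle((aX)\sharp_\alpha(bY))\xi,\xi\rangle\le(1-\alpha)+\alpha=1$, and undoing the scaling produces the claimed bound. Combining the two displays with $\alpha=\frac1q$ gives \eqref{e4}.

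I do not expect a serious obstacle here: once Proposition~\ref{p1} is in hand, \eqref{e3} is a direct specialization, and the only new ingredient for \eqref{e4} is the scalarization inequality above, which is classical and follows from Young plus homogeneity. If one prefers, that inequality can simply be quoted from the literature on operator means rather than reproved. The one point to state carefully is the strict positivity of $g(A^p)$ and $g(B^q)$, needed so that the normalizing constants $a,b$ are finite and positive; for $\xi=0$ both sides of \eqref{e4} vanish and there is nothing to prove.
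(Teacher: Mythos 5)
Your proof is correct. For \eqref{e3} you do exactly what the paper does: specialize Proposition~\ref{p1} to the pair $(A^p,B^q)$ with $\alpha=\tfrac1q$, noting $R=\max\{\tfrac1p,\tfrac1q\}$. For \eqref{e4} your route is a streamlined variant of the paper's. The paper does not derive \eqref{e4} from \eqref{e3}; instead it re-runs the whole chain from scratch --- reversed Young (Lemma~\ref{l1}), then Lemma~\ref{l2} to pull the constant $M$ out of $g(M(A^p\sharp_{1/q}B^q))$, then the log-convexity result of Ando--Hiai to get to $\langle (g(A^p)\sharp_{1/q}g(B^q))\xi,\xi\rangle$, and finally the scalarization inequality $\langle (X\sharp_\alpha Y)\xi,\xi\rangle\le\langle X\xi,\xi\rangle^{1-\alpha}\langle Y\xi,\xi\rangle^{\alpha}$, which it quotes from Bourin--Lee--Fujii--Seo. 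You instead observe that taking $\langle\,\cdot\,\xi,\xi\rangle$ of the already-proved \eqref{e3} lands you at exactly the same intermediate quantity, so only the scalarization step is new; you then prove that step yourself by homogeneity of $\sharp_\alpha$ plus the operator Young inequality, which is indeed the standard argument and is correct (your normalization $a=\langle X\xi,\xi\rangle^{-1}$, $b=\langle Y\xi,\xi\rangle^{-1}$ requires $\xi\neq 0$ and strict positivity of $g(A^p)$, $g(B^q)$, both of which you address). The net effect is the same constant and the same chain of intermediate bounds; your version simply avoids repeating the proof of Proposition~\ref{p1} inside the proof of \eqref{e4}, at the cost of supplying a self-contained proof of a lemma the paper prefers to cite.
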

\begin{proof}
Letting $\alpha = \frac{1}{q}$ and replacing $A^p$ and $B^q$ with $A$ and $B$ in Proposition \ref{p1}, we reach the inequlity \eqref{e3}. To prove inequality \eqref{e4}, first note that under the condition $0 < s A^p \leq B^q  \leq t A^p$ from Lemma \ref{l1} we have 
 \begin{align*}
A^p \nabla_\alpha B^q \leq  \max \lbrace  K(s)^R, K(t)^R \rbrace  ( A^p \sharp_\alpha B^q ).
\end{align*}
For conviniance set $M=\lbrace  K(s)^R, K(t)^R \rbrace $. So, for the operator monotone decreasing functions $g$ and $\alpha = \frac{1}{q}$
 \begin{align} \label{e5}
g ( M (A^p \sharp_\frac{1}{q}B^q)) \leq g(A^p \nabla_\frac{1}{q}B^q).
\end{align}
Now compute
\begin{align*} 
\langle g(A^p \sharp_\frac{1}{q}B^q) \xi , \xi \rangle
& \leq M \langle g ( M (A^p \sharp_\frac{1}{q}B^q)) \xi , \xi \rangle \nonumber\\
& \leq M \langle g ( A^p \nabla_\frac{1}{q}B^q)) \xi , \xi \rangle \nonumber\\ 
& \leq M \langle g (A^p) \sharp_\frac{1}{q} g (B^q) \xi , \xi \rangle \nonumber\\
& \leq M \langle g (A^p) \xi , \xi \rangle^{\frac{1}{p}} \langle g (B^q) \xi , \xi \rangle ^{\frac{1}{q}},\nonumber
\end{align*}
where the first inequality follows from Lemma \ref{l2} and  the second follows from the inequality \eqref{e5}. For the third inequality we use log-convexity property of operator monotone decreasing functions \cite[Theorem 2.1]{Tf}, and in the last inequality we use the fact that for every positive operators $A, B$ and every $\xi \in \mathcal{H}$,  $\langle A\sharp_\alpha B \xi , \xi \rangle \leq \langle A \xi , \xi \rangle ^{1-\alpha} \langle B \xi , \xi \rangle^{\alpha}$ \cite[Lemma 8]{BL}. So, we achieve
\begin{align*} 
\langle g(A^p \sharp_\frac{1}{q}B^q) \xi , \xi \rangle \leq \max \lbrace  K(s)^R, K(t)^R \rbrace \langle g(A^p) \xi , \xi \rangle ^\frac{1}{p} \langle g(B^q) \xi , \xi \rangle^\frac{1}{q}, 
\end{align*}
as desired.
\end{proof}
\begin{corollary}\label{c1}
Let $\frac{1}{p}+\frac{1}{q}=1$, $p, q\geq 1$, and $A, B$ be commuting positive operators with spectra contained in $(0, 1)$ such that $0 < s A^p \leq  B^q  \leq t A^p$ for some constants $s, t$. Then, for every unit vector $\xi \in \mathcal{H}$
\begin{align}\label{e15}
1- \| (A B)^\frac{1}{2} \xi \|^2 \leq \max \lbrace  K(s)^R, K(t)^R \rbrace (1-\| A^\frac{p}{2} \xi \|^2)^\frac{1}{p} (1-\| B^\frac{q}{2} \xi \|^2)^\frac{1}{q},
\end{align}
and consequently
\begin{align*}
1- \| A B \xi \|^2 \leq \max \lbrace  K(s^2)^R, K(t^2)^R \rbrace (1-\| A^p \xi \|^2)^\frac{1}{p} (1-\| B^q \xi \|^2)^\frac{1}{q},
\end{align*}
where $R = \max \lbrace \frac{1}{p}, \frac{1}{q} \rbrace$.
\end{corollary}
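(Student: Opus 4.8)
The plan is to derive both inequalities by specializing Theorem~\ref{t1} --- more precisely the vector inequality \eqref{e4} --- to the elementary operator monotone decreasing function $g(x)=1-x$, and then to read off the norm expressions using commutativity. Since $A$ and $B$ have spectra in $(0,1)$, so do $A^p$, $B^q$ and $A^p\sharp_{1/q}B^q$, so on the spectra of all the operators that occur the function $g(x)=1-x$ is non-negative; this is the point at which one has to be a little careful, because $1-x$ fails to be non-negative on all of $(0,\infty)$, and I would handle it by noting that only the values of $g$ on a compact subinterval of $(0,1)$ enter the argument.

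First I would observe that, because $A$ and $B$ commute, $A^p$ and $B^q$ commute, hence $A^p\sharp_{1/q}B^q=(A^p)^{1-1/q}(B^q)^{1/q}=(A^p)^{1/p}(B^q)^{1/q}=AB$, with $AB$ positive and again having spectrum in $(0,1)$. Next, for a unit vector $\xi$ and the self-adjoint operator $A^{p/2}$ one has $\langle g(A^p)\xi,\xi\rangle=1-\langle A^p\xi,\xi\rangle=1-\|A^{p/2}\xi\|^2$, and likewise $\langle g(B^q)\xi,\xi\rangle=1-\|B^{q/2}\xi\|^2$ and $\langle g(A^p\sharp_{1/q}B^q)\xi,\xi\rangle=\langle g(AB)\xi,\xi\rangle=1-\|(AB)^{1/2}\xi\|^2$ (using that $(AB)^{1/2}$ is self-adjoint). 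Inserting $g(x)=1-x$ into \eqref{e4} and substituting these three identities yields exactly \eqref{e15}, with $R=\max\{1/p,1/q\}$ and under the very same hypothesis $0<sA^p\le B^q\le tA^p$.

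For the second, ``consequently'', inequality I would apply \eqref{e15} with $A,B$ replaced by $A^2,B^2$. These still commute and still have spectra contained in $(0,1)$; moreover, squaring the commuting operator inequalities $sA^p\le B^q\le tA^p$ (legitimate since the operators involved commute and are positive) gives $s^2A^{2p}\le B^{2q}\le t^2A^{2p}$, i.e.\ the hypothesis of \eqref{e15} for the pair $A^2,B^2$ holds with $s^2,t^2$ in place of $s,t$. Finally $(A^2)^{p/2}=A^p$ and $(A^2B^2)^{1/2}=AB$, so $\|(A^2)^{p/2}\xi\|^2=\|A^p\xi\|^2$, $\|(B^2)^{q/2}\xi\|^2=\|B^q\xi\|^2$ and $\|(A^2B^2)^{1/2}\xi\|^2=\|AB\xi\|^2$; plugging these into \eqref{e15} produces the stated bound with the constant $\max\{K(s^2)^R,K(t^2)^R\}$.

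The only genuinely delicate step is the first one: one must make sure that the chain of inequalities underlying \eqref{e4} still functions for $g(x)=1-x$ even though this function is not non-negative on the whole half-line --- this is precisely why the hypothesis ``spectra contained in $(0,1)$'' is imposed. I would therefore check explicitly that at each occurrence of $g$ in the proof of Theorem~\ref{t1} the relevant operator has spectrum inside $(0,1)$, or, in the step involving $M(A^p\sharp_{1/q}B^q)$ where the spectrum may leave $(0,1)$, that one may still argue with $1-x$ as an operator monotone decreasing function on $(0,\infty)$. Everything else is routine bookkeeping with commuting positive operators and the identity $\langle C\xi,\xi\rangle=\|C^{1/2}\xi\|^2$ valid for positive $C$ and unit $\xi$.
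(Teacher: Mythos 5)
Your proposal is correct and follows essentially the same route as the paper: apply Theorem~\ref{t1} (inequality \eqref{e4}) with $g(t)=1-t$, use commutativity to identify $A^p\sharp_{1/q}B^q=AB$ and $\langle C\xi,\xi\rangle=\|C^{1/2}\xi\|^2$, then obtain the second inequality by substituting $A^2,B^2$ and squaring the sandwich condition. Your extra care about $1-t$ being non-negative and operator monotone decreasing only on $(0,1)$ rather than on all of $(0,\infty)$ is a point the paper itself glosses over, and is a worthwhile addition.
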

\begin{proof}
The first inequality is obtained by applying Theorem \ref{t1} to the function $g(t) = 1 - t$ on $(0, 1)$ and the fact that $A^p \sharp_\frac{1}{q} B^q = AB$. Also, for every positive operator $A$
\begin{align*}
\langle A \xi, \xi \rangle = \langle A^\frac{1}{2} \xi,   A^\frac{1}{2}\xi \rangle = \| A^\frac{1}{2} \xi \|.
\end{align*}
 For the second inequality note that since $AB = BA$, from the sandwich condition $0 < s A^p \leq  B^q  \leq t A^p$ we have $0 < s^2 A^{2p} \leq  B^{2q}  \leq t^2 A^{2p}$. Now replacing $A^2$ and $B^2$ with $A$ and $B$ in \eqref{e15}, the assertion is obtained.
\end{proof}
\begin{remark}
Moslehian in \cite[Corollary 2.4]{M}, showed operator version of Acz\'{e}l inequlity \eqref{e14} as follows:
\begin{align}\label{e13}
 (1-\| A^\frac{p}{2} \xi \|^2)^\frac{1}{p} (1-\| B^\frac{q}{2} \xi \|^2)^\frac{1}{q}  \leq 1- \| (A B)^\frac{1}{2} \xi \|^2, 
\end{align}
where $A$ and $B$ are commuting positive operators with spectra contained in $(0, 1)$, and $\frac{1}{p}+\frac{1}{q}=1$ for $p, q\geq 1$. As it seen, inequality \eqref{e15} in Corollary \ref{c1}, provides an upper bound for the operator Acz\'{e}l inequality \eqref{e13}. 
\end{remark}

\begin{corollary}
 Let $ g $ be a non-negative operator monotone decreasing function on $(0, \infty)$ and $A, B$ be commuting positive operators such that $0 < s A^p \leq  B^q  \leq t A^p$ for some constants $s, t$. Then
\begin{align*}
g(AB) \leq \max \lbrace  K(s)^R, K(t)^R \rbrace g(A^P)^\frac{1}{p} g(B^q)^\frac{1}{q},
\end{align*}
where $R= \max \lbrace \frac{1}{p}, \frac{1}{q} \rbrace $.
\end{corollary}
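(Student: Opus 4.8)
The plan is to read this off directly from inequality \eqref{e3} of Theorem \ref{t1}, using the commutativity of $A$ and $B$ to collapse both weighted geometric means into ordinary operator products. First I would check that the hypotheses of Theorem \ref{t1} are in force: $g$ is non-negative operator monotone decreasing on $(0,\infty)$, $\tfrac1p+\tfrac1q=1$ with $p,q>1$ (the degenerate case $p=1$ or $q=1$ being trivial), and the sandwich $0<sA^p\le B^q\le tA^p$ is exactly what is assumed. Applying \eqref{e3} gives
\[
g\big(A^p\sharp_{1/q}B^q\big)\le \max\{K(s)^R,K(t)^R\}\,\big(g(A^p)\sharp_{1/q}g(B^q)\big),
\qquad R=\max\{\tfrac1p,\tfrac1q\}.
\]

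Next I would simplify each side with $AB=BA$. Since $A$ and $B$ commute, $A^p$ and $B^q$ commute, so $A^p\sharp_{1/q}B^q=(A^p)^{1-1/q}(B^q)^{1/q}=(A^p)^{1/p}(B^q)^{1/q}=AB$. Likewise, because $A^p$ and $B^q$ commute, the operators $g(A^p)$ and $g(B^q)$ commute (by the continuous functional calculus, each being a norm limit of polynomials in $A^p$, resp. $B^q$), hence $g(A^p)\sharp_{1/q}g(B^q)=g(A^p)^{1-1/q}g(B^q)^{1/q}=g(A^p)^{1/p}g(B^q)^{1/q}$. Substituting these two identities into the displayed inequality yields exactly the asserted bound $g(AB)\le\max\{K(s)^R,K(t)^R\}\,g(A^p)^{1/p}g(B^q)^{1/q}$.

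I do not anticipate any real obstacle here; the argument is pure bookkeeping. The only points deserving a word of care are that $R=\max\{\tfrac1q,1-\tfrac1q\}=\max\{\tfrac1p,\tfrac1q\}$ matches the statement, that the multiplicative constant is carried over verbatim from \eqref{e3}, and that the products $(A^p)^{1/p}(B^q)^{1/q}$ and $g(A^p)^{1/p}g(B^q)^{1/q}$ are genuinely well defined as positive operators precisely because of the commutativity assumption.
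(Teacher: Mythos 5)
Your proof is correct and is exactly the route the paper intends: the corollary is stated without proof, but (as in the proof of Corollary 2.6, which invokes the identity $A^p\sharp_{1/q}B^q=AB$) it is just inequality \eqref{e3} of Theorem \ref{t1} combined with the observation that commutativity collapses both weighted geometric means into ordinary products. Your bookkeeping of the exponents and the constant is accurate, so there is nothing to add.
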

\begin{corollary}
 Let $ g $ be a non-negative decreasing function on $(0, \infty)$ and $a_i, b_i$ be positive numbers such taht $ 0 < s \leq \frac{(b_i)^q}{(a_i)^p } \leq t $ for some constants $s, t$. Then
\begin{align}\label{e16}
\sum_{i=1}^n g(a_i b_i) \leq \max \lbrace  K(s)^R, K(t)^R \rbrace \big( \sum_{i=1}^n g(a_i ^p) \big)^\frac{1}{p}  \big( \sum_{i=1}^n g(b_i ^q) \big)^\frac{1}{q}, 
\end{align}
where $R= \max \lbrace \frac{1}{p}, \frac{1}{q} \rbrace$.
\end{corollary}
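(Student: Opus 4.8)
The plan is to read \eqref{e16} as the commutative, finite-dimensional case of inequality \eqref{e4} in Theorem \ref{t1}. (As in that theorem, $g$ is understood to be operator monotone decreasing on $(0,\infty)$.) First I would take $\mathcal{H}=\mathbb{C}^n$ and let $A=\mathrm{diag}(a_1,\dots,a_n)$ and $B=\mathrm{diag}(b_1,\dots,b_n)$; since each $a_i,b_i>0$, these are commuting positive definite operators. Because the functional calculus of a diagonal operator acts entrywise, $A^p=\mathrm{diag}(a_i^p)$ and $B^q=\mathrm{diag}(b_i^q)$, so the assumption $0<s\le b_i^q/a_i^p\le t$ for every $i$ is exactly the sandwich condition $0<sA^p\le B^q\le tA^p$ required in Theorem \ref{t1}.

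Next I would identify the three quantities appearing in \eqref{e4}. Since $A$ and $B$ commute, $A^p\sharp_{1/q}B^q=(A^p)^{1/p}(B^q)^{1/q}=AB=\mathrm{diag}(a_ib_i)$, whence $g(A^p\sharp_{1/q}B^q)=\mathrm{diag}(g(a_ib_i))$, and likewise $g(A^p)=\mathrm{diag}(g(a_i^p))$ and $g(B^q)=\mathrm{diag}(g(b_i^q))$. Now apply \eqref{e4} with the vector $\xi=(1,1,\dots,1)^{\mathsf{T}}\in\mathbb{C}^n$ — legitimate because \eqref{e4} holds for every $\xi\in\mathcal{H}$, not merely for unit vectors. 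Using $\langle\mathrm{diag}(c_1,\dots,c_n)\xi,\xi\rangle=\sum_{i=1}^n c_i$, the left side of \eqref{e4} becomes $\sum_{i=1}^n g(a_ib_i)$ and the right side becomes $\max\{K(s)^R,K(t)^R\}\,(\sum_i g(a_i^p))^{1/p}(\sum_i g(b_i^q))^{1/q}$ with $R=\max\{1/p,1/q\}$, which is precisely \eqref{e16}.

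Every step is a plain specialization, so there is no real obstacle beyond the bookkeeping just described; the points deserving a word of care are that functional calculus on a diagonal matrix acts entrywise, that the weighted geometric mean of commuting operators collapses to the ordinary product $AB$, and that \eqref{e4} is available for arbitrary vectors. One can also argue without operators: the $n=1$ case of \eqref{e4} gives $g(a_ib_i)\le\max\{K(s)^R,K(t)^R\}\,g(a_i^p)^{1/p}g(b_i^q)^{1/q}$ for each $i$ (the same constant works for all $i$ since each pair satisfies $s a_i^p\le b_i^q\le t a_i^p$), and summing over $i$ and then applying Hölder's inequality $\sum_i g(a_i^p)^{1/p}g(b_i^q)^{1/q}\le(\sum_i g(a_i^p))^{1/p}(\sum_i g(b_i^q))^{1/q}$ again yields \eqref{e16}.
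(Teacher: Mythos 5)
Your proposal is correct and is essentially identical to the paper's own proof: the authors likewise take the diagonal operators $A(x_1,\dots,x_n)=(a_1x_1,\dots,a_nx_n)$ and $B(x_1,\dots,x_n)=(b_1x_1,\dots,b_nx_n)$ on $\mathcal{H}=\mathbb{C}^n$ with $\xi=(1,\dots,1)$ and apply inequality \eqref{e4}. Your extra remarks (entrywise functional calculus, $A^p\sharp_{1/q}B^q=AB$ for commuting operators, and the scalar-plus-H\"older alternative) only flesh out details the paper leaves implicit.
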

\begin{proof}
Let $A(x_1, x_2, \cdots , x_n) = (a_1 x_1, a_2 x_2, \cdots, a_n x_n)$ and $B(x_1, x_2, \ldots , x_n) =$\\
$ (b_1 x_1, b_2 x_2, \cdots, b_n x_n)$ be positive operators acting on Hilbert space $\mathcal{H}= C^n$ and $\xi = (1,1,\ldots,1)$. Now by applying inequality \eqref{e4} to the operators $A$ and $B$, we get the inequality \eqref{e16}.
\end{proof}

\section{Some related results}
Dragomir in \cite[Theorem 6]{D}, gave another reverse inequality for Young's
inequality as follows :
\begin{lemma}\label{l3}
Let $A, B$ be positive operators such that $0 < sA \leq B \leq tA$ for some constants $s, t$. Then, for all $\alpha \in [0 , 1]$
\begin{align}\label{e6}
(1-\alpha) A + \alpha B \leq  \exp \big( \frac{1}{2} \alpha ( 1- \alpha) \big( \frac{\max \lbrace 1, t \rbrace}{\min \lbrace 1, s \rbrace}-1\big)^2 \big)  A \sharp_\alpha B. 
\end{align}
\end{lemma}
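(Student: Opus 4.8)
The plan is to reduce \eqref{e6} to a one–variable estimate and then transfer it by functional calculus, exactly as in the proof of Lemma~\ref{l1}. I would set $C=A^{-1/2}BA^{-1/2}$, so that the hypothesis $0<sA\leq B\leq tA$ becomes $sI\leq C\leq tI$ and the spectrum of $C$ lies in $[s,t]$. It then suffices to prove the scalar inequality
\[
(1-\alpha)+\alpha x\leq\Lambda\,x^{\alpha},\qquad \Lambda:=\exp\!\Big(\tfrac12\,\alpha(1-\alpha)\big(\tfrac{\max\{1,t\}}{\min\{1,s\}}-1\big)^{2}\Big),
\]
for every $x\in[s,t]$: applying the functional calculus to $C$ gives $(1-\alpha)I+\alpha C\leq\Lambda\,C^{\alpha}$, and multiplying on both sides by $A^{1/2}$ turns the left side into $(1-\alpha)A+\alpha B$ and the right side into $\Lambda(A\sharp_\alpha B)$, which is \eqref{e6}. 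The cases $\alpha\in\{0,1\}$ are trivial, so I would assume $\alpha\in(0,1)$.

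For the scalar inequality I would study $g(x):=\log\big((1-\alpha)+\alpha x\big)-\alpha\log x$ on $(0,\infty)$; the claim becomes $\max_{x\in[s,t]}g(x)\leq\tfrac12\alpha(1-\alpha)\big(\tfrac{\max\{1,t\}}{\min\{1,s\}}-1\big)^{2}$. Differentiating,
\[
g'(x)=\frac{\alpha(1-\alpha)(x-1)}{x\big((1-\alpha)+\alpha x\big)},
\]
so $g$ decreases on $(0,1)$, increases on $(1,\infty)$, and $g(1)=0$; hence its maximum over $[s,t]$ is attained at an endpoint. For $x\geq1$ one has $x\big((1-\alpha)+\alpha x\big)\geq1$, so $0\leq g'(x)\leq\alpha(1-\alpha)(x-1)$, and integrating from $1$ yields the key bound $g(x)\leq\tfrac12\alpha(1-\alpha)(x-1)^{2}$ for $x\geq1$. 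To handle arguments below $1$ I would use the elementary identity $g(1/y)=\log\big(\alpha+(1-\alpha)y\big)-(1-\alpha)\log y$, i.e. $g$ evaluated at $1/y$ is the same function with $\alpha$ replaced by $1-\alpha$ evaluated at $y$; since $\alpha(1-\alpha)$ is symmetric, this gives $g(x)\leq\tfrac12\alpha(1-\alpha)(1/x-1)^{2}$ for $0<x\leq1$.

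Finally I would split according to the position of $[s,t]$ relative to $1$. If $1\leq s\leq t$, then $\max_{[s,t]}g=g(t)\leq\tfrac12\alpha(1-\alpha)(t-1)^{2}$ and $\tfrac{\max\{1,t\}}{\min\{1,s\}}=t$; if $s\leq t\leq1$, then $\max_{[s,t]}g=g(s)\leq\tfrac12\alpha(1-\alpha)(1/s-1)^{2}$ and $\tfrac{\max\{1,t\}}{\min\{1,s\}}=1/s$; and if $s\leq1\leq t$, then $\tfrac{\max\{1,t\}}{\min\{1,s\}}=t/s$, and one only needs $t/s-1\geq\max\{\,t-1,\ 1/s-1\,\}\geq0$, so that $(t/s-1)^{2}$ dominates both $(t-1)^{2}$ and $(1/s-1)^{2}$. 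In each case the required estimate on $\max_{[s,t]}g$ follows, which completes the scalar step. The one genuinely delicate point is this endpoint/case analysis together with the $x\mapsto1/x$ symmetry used for $x<1$; once the constant $\big(\tfrac{\max\{1,t\}}{\min\{1,s\}}-1\big)^{2}$ is recognised as a common bound for the two one–sided quantities, the remainder — including the passage from scalars to operators — is routine and parallels Lemma~\ref{l1}.
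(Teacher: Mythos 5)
Your proof is correct. Note first that the paper itself offers no proof of this lemma: it is quoted verbatim from Dragomir \cite[Theorem 6]{D}, so there is no in-paper argument to compare against; what you have written is a self-contained derivation (essentially a reconstruction of Dragomir's own logarithmic argument) grafted onto the reduction scheme of Lemma \ref{l1}. All the steps check out: the reduction to the scalar inequality via $C=A^{-1/2}BA^{-1/2}$ and conjugation by $A^{1/2}$ is the standard one; the computation $g'(x)=\alpha(1-\alpha)(x-1)/\bigl(x((1-\alpha)+\alpha x)\bigr)$ is right, so the maximum of $g$ on $[s,t]$ is indeed attained at an endpoint; the bound $x((1-\alpha)+\alpha x)\geq 1$ for $x\geq 1$ and the integration from $1$ give $g(x)\leq\tfrac12\alpha(1-\alpha)(x-1)^2$ there; the identity $g_\alpha(1/y)=g_{1-\alpha}(y)$ together with the symmetry of $\alpha(1-\alpha)$ correctly transfers this to $(0,1]$; and in the three-way case analysis the only nontrivial case, $s\leq 1\leq t$, is settled by $t/s-1\geq\max\{t-1,\,1/s-1\}\geq 0$. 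The one thing worth flagging is that your argument actually proves the slightly sharper endpoint bound $\max\{(t-1)^2,(1/s-1)^2\}$ in place of $\bigl(\max\{1,t\}/\min\{1,s\}-1\bigr)^2$ when $s\leq 1\leq t$, which you then relax to match the stated constant; that relaxation is valid and needed only to match Dragomir's form of the coefficient.
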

By using this new ratio we can express some other operator reverse inequalities. The proofs are similar to that of preceding section. 
\begin{proposition}\label{p2}
 Let $g $ be a non-negative operator monotone decreasing function on $(0, \infty)$ and $0 < sA \leq B \leq tA$ for some constants $s, t$. Then, for all $\alpha \in [0 , 1]$
\begin{align*} \label{}
g (A \sharp_\alpha B)  \leq \exp \big( \frac{1}{2} \alpha ( 1- \alpha) \big( \frac{\max \lbrace 1, t \rbrace}{\min \lbrace 1, s \rbrace}-1\big)^2 \big)  g(A)\sharp_\alpha g(B).
\end{align*}
\end{proposition}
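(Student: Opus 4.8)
The plan is to mirror the proof of Proposition \ref{p1} verbatim, with Lemma \ref{l3} playing the role that Lemma \ref{l1} played there. First I would set
\[
\lambda = \exp\Big( \tfrac{1}{2}\alpha(1-\alpha)\big( \tfrac{\max\{1,t\}}{\min\{1,s\}}-1\big)^2\Big),
\]
and observe that $\lambda \geq 1$, since the exponent is a product of non-negative quantities ($\alpha(1-\alpha)\geq 0$ on $[0,1]$, and $\frac{\max\{1,t\}}{\min\{1,s\}}\geq 1$ because $\max\{1,t\}\geq 1\geq \min\{1,s\}$). From the hypothesis $0<sA\leq B\leq tA$ and Lemma \ref{l3} we get $(1-\alpha)A+\alpha B \leq \lambda\,(A\sharp_\alpha B)$.

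Next I would apply the operator monotone decreasing function $g$ to the inequality $\lambda(A\sharp_\alpha B)\geq (1-\alpha)A+\alpha B$, which reverses it:
\[
g\big(\lambda(A\sharp_\alpha B)\big) \leq g\big((1-\alpha)A+\alpha B\big).
\]
Then I would chain three inequalities exactly as in Proposition \ref{p1}: the left bound $\frac{1}{\lambda}g(A\sharp_\alpha B)\leq g(\lambda(A\sharp_\alpha B))$ from Lemma \ref{l2} (valid since $\lambda\geq 1$), the middle step just displayed, and the log-convexity bound $g((1-\alpha)A+\alpha B)\leq g(A)\sharp_\alpha g(B)$ from \cite[Theorem 2.1]{Tf}. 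Combining and multiplying through by $\lambda$ yields
\[
g(A\sharp_\alpha B) \leq \lambda\,\big(g(A)\sharp_\alpha g(B)\big),
\]
which is the claim.

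There is essentially no obstacle here: the only point that needs a line of justification is that the exponential constant $\lambda$ is at least $1$, which is needed to invoke Lemma \ref{l2}, and this follows immediately from the sign of the exponent as noted above. Everything else is a direct transcription of the argument already given for Proposition \ref{p1}, with the Kantorovich factor $\max\{K(s)^R,K(t)^R\}$ replaced by the Dragomir exponential factor from Lemma \ref{l3}. The statement that "the proofs are similar to that of preceding section" is literally accurate, so the write-up can be kept to a few lines.
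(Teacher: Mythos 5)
Your proposal is correct and is exactly the paper's argument: the authors likewise prove Proposition \ref{p2} by repeating the proof of Proposition \ref{p1} with Lemma \ref{l3} in place of Lemma \ref{l1}, noting only that the exponential constant is at least $1$ so that Lemma \ref{l2} applies.
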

\begin{proof}
The assertion is obtained similar to the proof of Proposition \ref{p1}, by applying inequality \eqref{e6} instead of inequality \eqref{e1}. Note that for every $0\leq \alpha \leq 1$ and $s,t>0$,
$\exp \big( \frac{1}{2} \alpha ( 1- \alpha) \big( \frac{\max \lbrace 1, t \rbrace}{\min \lbrace 1, s \rbrace}-1\big)^2 \big)\geq1$.
\end{proof}
\begin{theorem}
 Let $ g $ be a non-negative operator monotone decreasing function on $(0, \infty)$ , $\frac{1}{p}+ \frac{1}{q}=1, p,q>1$,  and $0 < s A^p \leq B^q  \leq t A^p$ for some constants $s, t$. Then, for all $\xi \in \mathcal{H}$
\begin{align*} 
&g (A^p \sharp_\frac{1}{q} B^q)  \leq \exp \big( \frac{1}{2 pq} \big( \frac{\max \lbrace 1, t \rbrace}{\min \lbrace 1, s \rbrace}-1\big)^2 \big)g(A^p)\sharp_\frac{1}{q} g(B^q),\\ 
& \langle g(A^p \sharp_\frac{1}{q}B^q) \xi , \xi \rangle \leq \exp \big( \frac{1}{2 pq} \big( \frac{\max \lbrace 1, t \rbrace}{\min \lbrace 1, s \rbrace}-1\big)^2\big) \langle g(A^p) \xi , \xi \rangle ^\frac{1}{p} \langle g(B^q) \xi , \xi \rangle^\frac{1}{q}.
\end{align*}
\end{theorem}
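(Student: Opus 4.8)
The plan is to run exactly the argument used for Theorem \ref{t1}, but with the Kantorovich‑constant reverse Young inequality replaced throughout by Dragomir's version (Lemma \ref{l3}), which for the operator part has already been packaged in Proposition \ref{p2}.

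First I would derive the operator inequality. Set $\alpha=\frac{1}{q}$ and substitute $A^p$ for $A$ and $B^q$ for $B$ in Proposition \ref{p2}; the hypothesis $0<sA^p\le B^q\le tA^p$ is precisely what that proposition needs. Since $\frac{1}{p}+\frac{1}{q}=1$ we have $\alpha(1-\alpha)=\frac{1}{q}\cdot\frac{1}{p}=\frac{1}{pq}$, so the factor $\exp\big(\frac{1}{2}\alpha(1-\alpha)(\frac{\max\{1,t\}}{\min\{1,s\}}-1)^2\big)$ collapses to $\exp\big(\frac{1}{2pq}(\frac{\max\{1,t\}}{\min\{1,s\}}-1)^2\big)$, which gives the first displayed inequality at once.

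For the vector‑state inequality I would reproduce the four‑step chain from the proof of Theorem \ref{t1}. Write $M=\exp\big(\frac{1}{2pq}(\frac{\max\{1,t\}}{\min\{1,s\}}-1)^2\big)$ and note $M\ge1$, exactly as remarked in the proof of Proposition \ref{p2}. Then: (i) Lemma \ref{l2}, applied with $\lambda=M$ to the positive definite operator $A^p\sharp_\frac{1}{q}B^q$, gives $g(A^p\sharp_\frac{1}{q}B^q)\le M\,g\big(M(A^p\sharp_\frac{1}{q}B^q)\big)$, hence the same after taking $\langle\,\cdot\,\xi,\xi\rangle$; (ii) Lemma \ref{l3} with $\alpha=\frac{1}{q}$ yields $A^p\nabla_\frac{1}{q}B^q\le M(A^p\sharp_\frac{1}{q}B^q)$, and $g$ being operator monotone decreasing gives $g\big(M(A^p\sharp_\frac{1}{q}B^q)\big)\le g\big(A^p\nabla_\frac{1}{q}B^q\big)$; (iii) the log‑convexity of operator monotone decreasing functions, \cite[Theorem 2.1]{Tf}, gives $g\big(A^p\nabla_\frac{1}{q}B^q\big)\le g(A^p)\sharp_\frac{1}{q}g(B^q)$; (iv) finally \cite[Lemma 8]{BL} yields $\langle g(A^p)\sharp_\frac{1}{q}g(B^q)\,\xi,\xi\rangle\le\langle g(A^p)\xi,\xi\rangle^{1-\frac{1}{q}}\langle g(B^q)\xi,\xi\rangle^{\frac{1}{q}}=\langle g(A^p)\xi,\xi\rangle^{\frac{1}{p}}\langle g(B^q)\xi,\xi\rangle^{\frac{1}{q}}$. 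Concatenating (i)--(iv) produces the second displayed inequality.

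I do not anticipate a genuine obstacle: every ingredient is either quoted from the literature or established in the previous section, and the only things to verify are the bookkeeping identity $\alpha(1-\alpha)=\frac{1}{pq}$ and the bound $M\ge1$ (needed so that Lemma \ref{l2} applies with $\lambda=M$), both immediate. The single point worth a sentence of care is that the exponents $1-\frac{1}{q}$ and $\frac{1}{p}$ coincide, so that step (iv) lands exactly on the claimed right‑hand side.
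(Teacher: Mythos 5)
Your proposal is correct and follows exactly the route the paper intends: the paper gives no separate proof of this theorem, stating only that "the proofs are similar to that of the preceding section," and your argument is precisely the proof of Theorem \ref{t1} with Lemma \ref{l1} replaced by Lemma \ref{l3} (via Proposition \ref{p2}), together with the bookkeeping $\alpha(1-\alpha)=\frac{1}{pq}$ for $\alpha=\frac{1}{q}$ and the observation $M\geq 1$ needed for Lemma \ref{l2}. Nothing is missing.
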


In \cite[Theorem B]{FM} another reverse Young's inequality is presented as follows: 
\begin{lemma}\label{l5}
Let $A$ and $B$ be positive operators such that $0 <s A \leq B \leq A$ for a constant $s$ and $\alpha \in [0 , 1]$. Then 
\begin{align*}
(1-\alpha) A + \alpha B \leq  M_\alpha (s) (A \sharp_\alpha B),
\end{align*}
where $M_\alpha(s) = 1+ \dfrac{\alpha (1-\alpha) (s-1)^2}{2s^{\alpha+1}}$.
\end{lemma}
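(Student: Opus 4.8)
The plan is to reduce the operator inequality to a one–variable numerical inequality by the standard congruence trick, and then verify that numerical inequality by elementary calculus. Since $0 < sA \le B \le A$ forces $sA$ to be positive definite, $A$ is invertible; put $C := A^{-1/2} B A^{-1/2}$, so that $sI \le C \le I$ and the spectrum of $C$ lies in $[s,1]$. Multiplying the claimed inequality on both sides by $A^{-1/2}$ and recalling $A \sharp_\alpha B = A^{1/2} C^\alpha A^{1/2}$, the assertion becomes equivalent to $(1-\alpha)I + \alpha C \le M_\alpha(s)\, C^\alpha$, which by the spectral theorem follows once we prove
\[
(1-\alpha) + \alpha x \le M_\alpha(s)\, x^\alpha \qquad (s \le x \le 1).
\]
The degenerate cases $\alpha \in \{0,1\}$ are trivial (both sides coincide and $M_\alpha(s)=1$), so we may assume $0 < \alpha < 1$.

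Next I would locate the maximum of $f(x) := \bigl((1-\alpha) + \alpha x\bigr) x^{-\alpha} = (1-\alpha)x^{-\alpha} + \alpha x^{1-\alpha}$ over $[s,1]$. A direct differentiation gives $f'(x) = \alpha(1-\alpha)\, x^{-\alpha-1}(x-1)$, which is $\le 0$ on $[s,1]$; hence $f$ is decreasing there and $\max_{s \le x \le 1} f(x) = f(s) = \bigl((1-\alpha) + \alpha s\bigr) s^{-\alpha}$. Thus it remains to check the single inequality $\bigl((1-\alpha) + \alpha s\bigr) s^{-\alpha} \le M_\alpha(s)$; clearing the positive denominator $s^{\alpha+1}$, this is exactly
\[
g(s) := s^{\alpha+1} + \tfrac12\alpha(1-\alpha)(s-1)^2 - (1-\alpha)s - \alpha s^2 \ \ge\ 0, \qquad 0 < s \le 1.
\]

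Finally I would establish $g \ge 0$ by a convexity argument anchored at the right endpoint. One computes $g(1)=0$ and $g'(1)=0$, while $g''(s) = \alpha(\alpha+1)\bigl(s^{\alpha-1}-1\bigr) \ge 0$ for $0 < s \le 1$, since $\alpha - 1 \le 0$ gives $s^{\alpha-1}\ge 1$. So $g$ is convex on $(0,1]$ with a stationary point at $s=1$, hence $s=1$ is its global minimum on that interval and $g(s)\ge g(1)=0$, which is what we needed. I do not expect a genuine obstacle here; the only points demanding a little care are confirming the reduction to the scalar problem is legitimate — i.e. that $A$ is invertible, which is guaranteed by $sA>0$, and that $s\le 1$, which follows from $B\le A$ — and keeping track of the sign of $\alpha-1$ when asserting convexity of $g$.
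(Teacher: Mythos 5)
Your proof is correct. Note that the paper does not actually prove this lemma --- it is quoted verbatim from \cite[Theorem B]{FM} --- so there is no internal proof to compare against; your argument supplies a complete, self-contained derivation. The route you take is the standard one for such reverse Young inequalities and all the computations check out: the congruence by $A^{-1/2}$ legitimately reduces the claim to $(1-\alpha)+\alpha x\le M_\alpha(s)x^\alpha$ on $[s,1]$ (with $A,B$ invertible since $B\ge sA>0$, and $s\le 1$ since $B\le A$); the derivative $f'(x)=\alpha(1-\alpha)x^{-\alpha-1}(x-1)\le 0$ correctly identifies $x=s$ as the maximizer of the ratio; and the endpoint inequality reduces, after clearing $s^{\alpha+1}$, to $g(s)\ge 0$ with $g(1)=g'(1)=0$ and $g''(s)=\alpha(\alpha+1)(s^{\alpha-1}-1)\ge 0$ on $(0,1]$, so convexity with a stationary point at $s=1$ indeed forces $g\ge 0$ there. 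The only cosmetic remark is that you could note explicitly that $\sigma(C)\subseteq[s,1]$ is what licenses applying the scalar inequality to $C$ via the functional calculus, but you do state this, so nothing is missing.
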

Now by using this new constant, the similar reverse Acz\'{e}l inequalities are obtained. Note that $M_\alpha(s) \geq 1$ for every $\alpha \in [0 , 1]$. See \cite{FM} for more properties of $M_\alpha(s)$.

\begin{proposition}\label{p3}
 Let $g$ be a non-negative operator monotone decreasing function on $(0, \infty)$, $0 < sA \leq B \leq A$ for a constant $s$ and $\alpha \in [0 , 1]$. Then
\begin{align*} \label{}
g (A \sharp_\alpha B)  \leq  M_\alpha (s) (g(A)\sharp_\alpha g(B)).
\end{align*}
\end{proposition}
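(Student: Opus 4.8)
The plan is to follow verbatim the three-step argument used for Propositions \ref{p1} and \ref{p2}, replacing the Kantorovich (resp.\ exponential) factor by the constant $M_\alpha(s)$ from Lemma \ref{l5}. First I would invoke Lemma \ref{l5}: from $0 < sA \le B \le A$ one gets
\[
(1-\alpha)A + \alpha B \le M_\alpha(s)\,(A \sharp_\alpha B),
\]
and, as recorded just after Lemma \ref{l5}, $M_\alpha(s) \ge 1$ for every $\alpha \in [0,1]$ (note also that the hypothesis $sA \le B \le A$ forces $s \in (0,1]$).

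Set $\lambda := M_\alpha(s) \ge 1$. Since $g$ is operator monotone decreasing, applying $g$ to the previous display reverses it:
\[
g\big(\lambda\,(A \sharp_\alpha B)\big) \le g\big((1-\alpha)A + \alpha B\big).
\]
To the left-hand side I apply Lemma \ref{l2} with this $\lambda \ge 1$, giving $\frac{1}{\lambda}\,g(A \sharp_\alpha B) \le g\big(\lambda\,(A \sharp_\alpha B)\big)$; to the right-hand side I apply the log-convexity (operator concavity) of the operator monotone decreasing function $g$, i.e.\ \cite[Theorem 2.1]{Tf}, to obtain $g\big((1-\alpha)A + \alpha B\big) \le g(A) \sharp_\alpha g(B)$. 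Chaining the three estimates yields $\frac{1}{\lambda}\,g(A \sharp_\alpha B) \le g(A) \sharp_\alpha g(B)$, and multiplying by $\lambda = M_\alpha(s)$ gives the claimed inequality.

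I do not anticipate any real obstacle: this is exactly the same sandwich argument as in Section~2, and the single point deserving a word of care is that Lemma \ref{l5} is stated precisely under the one-sided condition $sA \le B \le A$ assumed here, so --- unlike in the Remark following Lemma \ref{l1} --- no rescaling of the bounds is needed. The only structural ingredient that must be in place is $M_\alpha(s) \ge 1$, which is what makes Lemma \ref{l2} applicable; this is why I would state it explicitly at the start of the proof.
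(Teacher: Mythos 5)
Your proof is correct and is exactly the argument the paper intends (the paper omits the proof, noting only that it is "similar" to that of Proposition \ref{p1} with $M_\alpha(s)\geq 1$ in place of the Kantorovich factor): Lemma \ref{l5}, then reversal by operator monotone decreasingness of $g$, then Lemma \ref{l2} with $\lambda=M_\alpha(s)\geq 1$, then \cite[Theorem 2.1]{Tf}. Nothing is missing.
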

\begin{theorem}
 Let $ g $ be a non-negative operator monotone decreasing function on $(0, \infty)$ , $\frac{1}{p}+ \frac{1}{q}=1, p,q>1$,  and $0 < s A^p \leq B^q  \leq A^p$ for a constant $s$. Then, for all $\xi \in \mathcal{H}$
\begin{align*} 
&g (A^p \sharp_\frac{1}{q} B^q)  \leq M_{\frac{1}{q}}(s) (g(A^p)\sharp_\frac{1}{q} g(B^q)),\\ 
& \langle g(A^p \sharp_\frac{1}{q}B^q) \xi , \xi \rangle \leq M_{\frac{1}{q}} (s) \langle g(A^p) \xi , \xi \rangle ^\frac{1}{p} \langle g(B^q) \xi , \xi \rangle^\frac{1}{q}.
\end{align*}
\end{theorem}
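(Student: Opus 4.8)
The plan is to mimic the proof of Theorem~\ref{t1} almost verbatim, replacing the Kantorovich factor $\max\{K(s)^R,K(t)^R\}$ by the constant $M_{1/q}(s)$ of Lemma~\ref{l5}. For the first inequality I would simply specialize Proposition~\ref{p3}: put $\alpha=\frac1q$ and substitute $A^p$ for $A$ and $B^q$ for $B$. The hypothesis $0<sA^p\le B^q\le A^p$ is exactly the sandwich condition required by Proposition~\ref{p3} after these substitutions, so it yields directly
\[
g\big(A^p\sharp_{\frac1q}B^q\big)\le M_{\frac1q}(s)\,\big(g(A^p)\sharp_{\frac1q}g(B^q)\big),
\]
which is the first assertion.

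For the second inequality I would run the same four-step chain used in Theorem~\ref{t1}. First, apply Lemma~\ref{l5} with $\alpha=\frac1q$ to the operators $A^p$ and $B^q$ to get $A^p\nabla_{\frac1q}B^q\le M_{\frac1q}(s)(A^p\sharp_{\frac1q}B^q)$; write $M:=M_{\frac1q}(s)$ for brevity and recall $M\ge1$. Since $g$ is operator monotone decreasing, this gives $g\big(M(A^p\sharp_{\frac1q}B^q)\big)\le g\big(A^p\nabla_{\frac1q}B^q\big)$, while Lemma~\ref{l2} (applicable because $M\ge1$) gives $\frac1M g(A^p\sharp_{\frac1q}B^q)\le g\big(M(A^p\sharp_{\frac1q}B^q)\big)$. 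Combining these and passing to the quadratic form at $\xi$,
\[
\langle g(A^p\sharp_{\frac1q}B^q)\xi,\xi\rangle\le M\,\langle g(A^p\nabla_{\frac1q}B^q)\xi,\xi\rangle .
\]
Next, the log-convexity of operator monotone decreasing functions, \cite[Theorem 2.1]{Tf}, yields $g(A^p\nabla_{\frac1q}B^q)\le g(A^p)\sharp_{\frac1q}g(B^q)$, and finally \cite[Lemma 8]{BL} gives $\langle g(A^p)\sharp_{\frac1q}g(B^q)\xi,\xi\rangle\le\langle g(A^p)\xi,\xi\rangle^{1/p}\langle g(B^q)\xi,\xi\rangle^{1/q}$, since $1-\frac1q=\frac1p$. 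Chaining the four estimates produces the stated bound.

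I do not expect any genuinely new difficulty beyond what was already handled in Theorem~\ref{t1}; the only points requiring a word of care are (i) verifying $M_{\frac1q}(s)\ge1$, which is needed both to invoke Lemma~\ref{l2} and to keep the direction of the inequality correct when multiplying through by $M$, and which is recorded in \cite{FM} (and noted just before Proposition~\ref{p3}); and (ii) observing that Lemma~\ref{l5} tacitly forces $0<s\le1$, so that the hypothesis $0<sA^p\le B^q\le A^p$ is indeed the correct one-sided specialization of the two-sided sandwich condition used in Section~2. With these remarks in place the proof is a routine transcription.
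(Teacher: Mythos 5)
Your proposal is correct and follows exactly the route the paper intends: the paper omits the proof, remarking only that it is obtained as in Section 2 with the constant $M_{\frac{1}{q}}(s)$ of Lemma \ref{l5} in place of the Kantorovich factor, and your four-step chain (Lemma \ref{l5}, Lemma \ref{l2}, the log-convexity result of Ando--Hiai, and the inner-product inequality for the geometric mean) is precisely the argument of Theorem \ref{t1} transcribed to this setting.
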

\begin{remark}
We clearly see that the condition $0 < sA \leq B \leq tA $ for some $s\leq t$ in Lemma \ref{l3} is more general than the condition $0 < sA \leq B \leq A$ for $s\leq 1$ in Lemma \ref{l5}. But under the same condition $0<sA \leq B \leq A$, the appeared constant in Lemma \ref{l5} gives a better estimate than ones in Lemma \ref{l3}. In fact, we have 
\begin{align*} \label{}
M_\alpha (s) \leq \exp \big( \frac{1}{2} \alpha ( 1- \alpha) \big( \frac{1}{s}-1\big)^2 \big),
\end{align*}
for every $\alpha \in [0, 1]$ and $0<s \leq 1$ \cite[Proposition 2.9]{F1}.
\end{remark}

In \cite{GK} it is shown that if $f :  [0,\infty) \longrightarrow [0,\infty)$ is an operator monotone function and $0 <s A \leq B \leq t A$ for some constants $s, t$, then for all $\alpha \in [0 , 1]$
\begin{align*} \label{}
f(A)\sharp_\alpha f(B)  \leq max \lbrace S(s), S(t) \rbrace f( A \sharp_\alpha B ), 
\end{align*}
where $S(t)=\dfrac{t^{\frac{1}{t-1}}}{e \log t^{\frac{1}{t-1}}}$, for $t>0$ is the so called Specht's ratio.
As a result, we can show for a non-negative operator monotone decreasing function $g$ on $(0, \infty)$, $0 <s A \leq B \leq t A$, and $\alpha \in [0 , 1]$
\begin{align} \label{e17}
g (A \sharp_\alpha B)  \leq max \lbrace S(s), S(t) \rbrace  ( g(A)\sharp_\alpha g(B)).
\end{align}
Hence, one can deduce another reverse of operator Acz\'{e}l inequlity with the constant $max \lbrace S(p), S(q) \rbrace$, which is independent to $\alpha$.

\begin{remark}
In this paper, three evaluation expressions are derived. In the following, we show that there is no ordering between the appeared estimates.
\begin{enumerate}
\item[(1)] Comparison of the constants in Lemma \ref{l1} and in Lemma \ref{l3}: \\
 Let $0 < s A \leq B  \leq t A$ for some constants $s, t$ and $\alpha \in [0, 1]$. Also, with no loss of generality let  $s< t < 1$. Since $K$ is decreasing function on $(0, 1)$, by Lemma \ref{l1} we have 
 \begin{align*} 
(1-\alpha ) A+ \alpha B \leq  K(s)^R ( A \sharp_\alpha B ),
\end{align*}
where $R=\max \lbrace \alpha, 1-\alpha \rbrace$. Also, be Lemma \ref{l3}
\begin{align*}
(1-\alpha) A + \alpha B \leq  \exp \big( \frac{1}{2 }\alpha (1-\alpha) \big( \frac{1}{ s }-1\big)^2 \big)  A \sharp_\alpha B. 
\end{align*}
Now, the following numerical examples show that there is no ordering between them:
\begin{itemize}
\item[(i)] Take $\alpha=0.9$ and $s=0.3$, then we have
\begin{align*}
\max \lbrace  K(s)^\alpha, K(s)^{1-\alpha} \rbrace - \exp \big( \frac{1}{2 }\alpha (1-\alpha) \big( \frac{1}{ s }-1\big)^2 \big) \simeq 0.0833059.
\end{align*}

\item[(ii)] Take $\alpha=0.3$ and $s=0.3$, then we have
\begin{align*}
\max \lbrace  K(s)^\alpha, K(s)^{1-\alpha} \rbrace - \exp \big( \frac{1}{2 }\alpha (1-\alpha) \big( \frac{1}{ s }-1\big)^2 \big) \simeq -0.500368.
\end{align*}
\end{itemize}

\item[(2)] Comparison of the constants in the inequality \eqref{e17}  and in the inequality \eqref{e18} of Proposition \ref{p1}: \\
 Let $0 < s A \leq B  \leq t A$ for some constants $s, t$ and $\alpha \in [0, 1]$. Also, with no loss of generality let $1\leq s \leq t$. Then, from inequality \eqref{e17} we have 
\begin{align*} \label{}
g (A \sharp_\alpha B)  \leq S(t)  ( g(A)\sharp_\alpha g(B)),
\end{align*}
and from inequality \eqref{e18}
\begin{align*} \label{}
 g( A \sharp_\alpha B) \leq  K(t)^R  (g(A)\sharp_\alpha g(B)),
\end{align*}
where $R= \max \lbrace \alpha, 1-\alpha \rbrace$. We compare coefficients of these inequalitis as follows:
\begin{itemize}
\item[(i)] Take $\alpha=0.8$ and $t=9$, then 
\begin{align*}
\max \lbrace  K(t)^\alpha, K(t)^{1-\alpha} \rbrace - S(t) \simeq 0.501632.
\end{align*}

\item[(ii)] Take $\alpha=0.1$ and $t=9$, then 
\begin{align*}
\max \lbrace  K(t)^\alpha, K(t)^{1-\alpha} \rbrace - S(t) \simeq -0.655227.
\end{align*}
\end{itemize}
\end{enumerate}
\end{remark}

\bibliographystyle{amsplain}

\end{document}